\documentclass[11pt,a4paper]{article}
\usepackage[T1]{fontenc}
\usepackage[utf8]{inputenc}
\usepackage{lmodern}
\usepackage{microtype}
\usepackage{amsmath,amssymb,amsthm,mathtools,bm}
\usepackage{geometry}
\usepackage{enumitem}
\usepackage{xcolor}
\usepackage{cite}
\usepackage[hidelinks]{hyperref}
\usepackage[nameinlink,capitalise,noabbrev]{cleveref}
\allowdisplaybreaks

\newtheorem{theorem}{Theorem}[section]
\newtheorem{proposition}[theorem]{Proposition}
\newtheorem{lemma}[theorem]{Lemma}
\newtheorem{corollary}[theorem]{Corollary}
\theoremstyle{definition}
\newtheorem{definition}[theorem]{Definition}

\theoremstyle{remark}
\newtheorem{remark}[theorem]{Remark}

\newcommand{\A}{\mathbb A}
\newcommand{\R}{\mathbb R}
\newcommand{\m}{\mathfrak m}
\newcommand{\one}{1_{\A}}
\newcommand{\pr}{\pi}
\newcommand{\JA}{J_{\A}}
\newcommand{\so}{\mathfrak{so}}
\newcommand{\SO}{\mathrm{SO}}
\newcommand{\DR}{\mathrm D_{\!R}}
\newcommand{\Ann}{\operatorname{Ann}}
\newcommand{\Exp}{\operatorname{Exp}_{\A}}
\newcommand{\Log}{\operatorname{Log}_{\A}}
\newcommand{\wide}[1]{[\,#1\,]_{\times}}
\hypersetup{
  pdftitle={Finite Weil Jets, Identity Transfer, and the Exponential Geometry of Proper Orthogonal Tensors},
  pdfauthor={Daniel Condurache},
  pdfsubject={Finite Weil jets and exponential geometry over Weil algebras},
  pdfkeywords={Weil algebra, finite jet, Weil functor, proper orthogonal group, Rodrigues formula, Lie group exponential}
}

\title{Finite Weil Jets, Identity Transfer, and the Exponential Geometry of Proper Orthogonal Tensors}
\author{Daniel Condurache\\
\small Department of Theoretical Mechanics, Gheorghe Asachi Technical University of Ia\c si,\\
\small D. Mangeron Street 59, 700050 Ia\c si, Romania\\
\small Corresponding author: \href{mailto:daniel.condurache@tuiasi.ro}%
{\texttt{daniel.condurache@tuiasi.ro}}}
\date{}

\begin{document}
\maketitle

\begin{abstract}
Let \(\A=\R\one\oplus\m\) be a Weil algebra with \(\m^{N+1}=0\). We study the
exponential geometry of the proper orthogonal group \(\SO(3,\A)\) through finite Weil
prolongation. An exact Rodrigues formula is obtained for every element of
\(\so(3,\A)\), and the exponential map \(\so(3,\A)\to\SO(3,\A)\) is proved to be
surjective. Although every proper orthogonal Weil tensor is therefore exponential and has an
unnormalised Rodrigues representation, a factorisation of a generator into one scalar angle
and one unit axis may fail above the identity; the obstruction is established for every
logarithmic branch, including scalar angles \(2k\pi\). The analytic mechanism is an explicit
finite-regularity rigidity theorem: an \(\A\)-holomorphic map on \(U+\m\) with scalar-valued
real trace is uniquely its finite Weil jet. Together with functoriality, this yields exact
transfer of functional, matrix, orthogonality, and composition identities. The results provide
a rigorous basis for dual, multidual, hyper-dual, and hyper-multidual calculations in
theoretical and computational kinematics.
\end{abstract}

\noindent\textbf{Keywords:} Weil algebra; finite jet; Weil functor; proper orthogonal group;
Rodrigues formula; Lie group exponential; multidual kinematics.

\noindent\textbf{Mathematics Subject Classification (2020):} 13N99; 30G35; 58A20; 53A17.

\section{Introduction}

Nilpotent extensions of the real numbers occur in several mathematical and computational
languages that developed partly independently. Dual numbers encode first-order differential
information and have a classical role in screw theory and spatial kinematics. Higher-order
chain algebras encode successive derivatives; hyper-dual algebras isolate mixed second
derivatives; tensor products of such algebras support multivariate automatic differentiation;
and hyper-multidual algebras combine higher-order differentiation with the dual structures
used for rigid displacements. These constructions are special cases of finite-dimensional
local algebras introduced geometrically by Weil through his theory of near points. Their
prolongation to manifolds, functoriality, and interpretation as scalar extension belong to the
classical theory of Weil functors and manifolds over local algebras
\cite{Weil1953,Kolar1993,BertramSouvay2014,Bertram2014,Shurygin1993,Shurygin2002}.

In computational kinematics, dual algebra replaces pairs of rotational and translational
relations by a single algebraic relation and supports invariant descriptions of lines, screws,
and rigid displacements \cite{Angeles1998,BottemaRoth1990}. It also leads to effective linear
algebra and numerical algorithms \cite{PennestriStefanelli2007}. Hyper-dual arithmetic was
introduced for exact second-derivative calculations \cite{FikeAlonso2011} and developed for
numerical optimisation \cite{FikeEtAl2011}. Multidual and hyper-multidual structures
subsequently provide compact
representations of velocity, acceleration, jerk, snap, and arbitrary finite orders of rigid-body
and robot kinematics \cite{Condurache2020,Condurache2022,ConduracheOverview2025}. Their computational use was
examined in \emph{Mechanism and Machine Theory} for a surgical parallel robot by comparing
classical differentiation and multidual implementations in vector, homogeneous-matrix, and
dual-quaternion formalisms \cite{ConduracheMMT2025}. The common mechanism is not an infinite
series in a nilpotent variable: it is a finite jet whose order is fixed by the nilpotency index
of the coefficient algebra.

This distinction matters. Expressions such as \(\sin Z\), \(\exp Z\), or \(f(x+H)\) are often
written by formally substituting a dual or multidual argument into a real Taylor series. Such
notation may return the correct finite formula, but it is not itself a definition of
differentiability, holomorphy, or Taylor expansion on a Weil algebra. Here all prolongations
are defined by finite real jets. Taylor's theorem enters only once, for a function of a real
parameter with values in a finite-dimensional real vector space.

The abstract existence and functoriality of Weil prolongations are classical. The present
article develops an explicit finite-regularity calculus on open subsets of real vector spaces
and uses it to analyse a concrete Lie-theoretic question. The scalar-valued real trace rigidly
determines an \(\A\)-holomorphic function and forces it to be the finite jet of that trace;
finite prolongation then preserves sums, products, inverses, admissible compositions, and
identities. Applied to rotations, this mechanism yields an exact Rodrigues calculus and the
principal geometric results of the paper: the exponential map onto \(\SO(3,\A)\) is
surjective, whereas a normalised scalar-angle/unit-axis factorisation may fail above the
identity. The branch-independent obstruction in \cref{thm:noaxis} makes precise the
difference between exponentiality and unit-axis factorisation.

\section{Weil algebras and finite prolongation}

\begin{definition}
A \emph{Weil algebra} is a finite-dimensional, commutative, associative real algebra with unit
of the form
\[
  \A=\R\one\oplus\m,
  \qquad \m^{N+1}=0,\quad \m^N\ne0,
\]
where \(\m\) is the unique maximal ideal. The integer \(N\) is its nilpotency index. The
canonical algebra morphism \(\pr:\A\to\R\) is the scalar-part projection and
\(\ker\pr=\m\).
\end{definition}

Every \(Z\in\A\) is uniquely \(Z=x+H\), with \(x=\pr(Z)\) and \(H\in\m\). Relevant examples
include
\begin{align*}
 \R[\varepsilon]/(\varepsilon^2)& &&(N=1),\\
 \R[\varepsilon]/(\varepsilon^{n+1})& &&(N=n),\\
 \R[\varepsilon_1,\varepsilon_2]/(\varepsilon_1^2,\varepsilon_2^2)& &&(N=2),\\
 \R[\varepsilon_1,\ldots,\varepsilon_n]/(\varepsilon_1^2,\ldots,\varepsilon_n^2)& &&(N=n),\\
 \R[\varepsilon]/(\varepsilon^{n+1})\otimes_{\R}
 \R[\varepsilon_0]/(\varepsilon_0^2)& &&(N=n+1).
\end{align*}
They are, respectively, the dual, chain multidual, two-direction hyper-dual,
\(2^n\)-dimensional square-zero-generator, and hyper-multidual algebras.

Let \(U\subset\R^p\) be open and put
\(U^{\A}=\{X\in\A^p:\pr(X)\in U\}\), where \(\pr\) acts componentwise.

\begin{definition}[Finite Weil prolongation]
For \(f\in C^N(U,\R^q)\), \(x\in U\), and
\(H=(H_1,\ldots,H_p)\in\m^p\), define
\begin{equation}\label{eq:prolongation}
 (\JA f)(x+H)=
 \sum_{|\alpha|\le N}\frac{1}{\alpha!}
 (\partial^\alpha f)(x)H^\alpha\in\A^q.
\end{equation}
The sum is finite. Terms of degree greater than \(N\) vanish because
\(H^\alpha\in\m^{|\alpha|}=0\).
\end{definition}

Thus \eqref{eq:prolongation} is an exact algebraic evaluation of a finite jet, not a convergent
or formally truncated infinite series. It is independent of coordinates in the standard
functorial sense of Weil prolongation \cite[Chapter VIII]{Kolar1993}.

\section{Holomorphy and uniqueness in one variable}

For the rigidity theorem we first take \(p=q=1\), let \(U\subset\R\) be an open interval,
and identify \(\A\) with its underlying finite-dimensional real vector space.

\begin{definition}[\(\A\)-holomorphy]
A map \(F\in C^1(U^{\A},\A)\) is \emph{\(\A\)-holomorphic} if its real Fr\'echet differential
is \(\A\)-linear at every point. Equivalently, there is a uniquely determined map
\(F':U^{\A}\to\A\) such that
\begin{equation}\label{eq:hol}
 \DR F(Z)[K]=F'(Z)K,\qquad K\in\A.
\end{equation}
Indeed, \(F'(Z)=\DR F(Z)[\one]\).
\end{definition}

This definition involves only the first real differential and is neither circular nor dependent
on an undeclared multidual Taylor expansion.

\begin{lemma}[Propagation of holomorphy]\label{lem:propagation}
If \(r\ge2\) and \(F\in C^r(U^{\A},\A)\) is \(\A\)-holomorphic, then
\(F'=\DR F(\cdot)[\one]\in C^{r-1}\) is \(\A\)-holomorphic. More precisely,
\[
 \DR F'(Z)[K]=F''(Z)K,
 \qquad F''(Z):=\DR F'(Z)[\one].
\]
\end{lemma}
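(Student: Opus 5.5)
The plan is to differentiate the holomorphy identity \eqref{eq:hol} once more and then use the symmetry of the second real differential. Since \(F\in C^r\) with \(r\ge2\), the map \(Z\mapsto \DR F(Z)\) is \(C^{r-1}\) into \(\mathrm{L}(\A,\A)\). Evaluation at the fixed vector \(\one\) is a continuous linear map, so \(F'=\DR F(\cdot)[\one]\) is \(C^{r-1}\), and
\[
 \DR F'(Z)[K]=\mathrm D^2_{\!R}F(Z)[K,\one].
\]
Everything therefore reduces to computing the bilinear map \(\mathrm D^2_{\!R}F(Z)[K,L]\).

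The key step fixes \(L\in\A\) and differentiates the identity \(\DR F(Z)[L]=F'(Z)L\) in the direction \(K\). Multiplication by the constant \(L\) is real-linear on \(\A\), and the left side is \(C^1\) in \(Z\). This gives
\[
 \mathrm D^2_{\!R}F(Z)[K,L]=\bigl(\DR F'(Z)[K]\bigr)L .
\]
Taking \(L=\one\) is consistent with the previous paragraph. We then apply Schwarz's theorem, which is valid because \(F\in C^2\). It gives \(\mathrm D^2_{\!R}F(Z)[K,L]=\mathrm D^2_{\!R}F(Z)[L,K]\), and by the display above the right-hand side equals \(\bigl(\DR F'(Z)[L]\bigr)K\). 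Setting \(L=\one\) yields
\[
 \DR F'(Z)[K]=\bigl(\DR F'(Z)[\one]\bigr)K=F''(Z)K.
\]
Here we use the commutativity of \(\A\) and \(\one K=K\). This is precisely \(\A\)-linearity of \(\DR F'(Z)\), with derivative \(F''(Z)=\DR F'(Z)[\one]\).

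The only delicate point is the order of arguments in the second differential. We must differentiate the identity with \(L\) held fixed, then swap the arguments via symmetry, and only afterwards specialise \(L=\one\). Specialising first gives only a tautology. Commutativity of \(\A\) is what lets the scalar factor \(K\) be moved to the correct side. The regularity hypotheses \(r\ge2\) and \(F'\in C^{r-1}\) follow directly from the first paragraph, and \(C^1\) regularity of \(F'\) is what the definition of \(\A\)-holomorphy requires.
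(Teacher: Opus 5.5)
Your proof is correct and is essentially the paper's argument: differentiate \eqref{eq:hol} with one direction held fixed, swap the two slots of \(\mathrm D^2_{\!R}F(Z)\) by symmetry, and only then set one argument equal to \(\one\) (the paper labels the slots the other way round and sets \(K=\one\)). One small remark: commutativity of \(\A\) is not needed to reach \(\DR F'(Z)[K]=F''(Z)K\), because the product rule already puts \(\DR F'(Z)[L]\) to the left of \(K\); commutativity enters only when you identify this formula with \(\A\)-linearity.
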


\begin{proof}
For \(K,L\in\A\), differentiate \eqref{eq:hol} and use the symmetry of the second real
differential:
\[
 \DR^2F(Z)[K,L]=\DR F'(Z)[L]K
                 =\DR F'(Z)[K]L.
\]
Set \(K=\one\). Then
\(\DR F'(Z)[L]=\DR F'(Z)[\one]L\), as required.
\end{proof}

Iterating \cref{lem:propagation}, if \(F\in C^{N+1}\), the functions
\(F^{(0)},\ldots,F^{(N)}\) are holomorphic, and
\(F^{(N+1)}:=\DR F^{(N)}(\cdot)[\one]\) is continuous.

\begin{theorem}[Uniqueness of the finite-jet extension]\label{thm:uniqueness}
Let \(\A\) have nilpotency index \(N\). Suppose that
\(F\in C^{N+1}(U^{\A},\A)\) is \(\A\)-holomorphic and that its restriction to the real axis
is scalar-valued:
\begin{equation}\label{eq:trace}
 F(x)=f(x)\one,\qquad x\in U.
\end{equation}
Then \(f\in C^{N+1}(U,\R)\), and
\begin{equation}\label{eq:uniquejet}
 F(x+H)=\sum_{k=0}^{N}\frac{f^{(k)}(x)}{k!}H^k
       =(\JA f)(x+H),\qquad H\in\m.
\end{equation}
Consequently, two \(C^{N+1}\), \(\A\)-holomorphic maps with the same scalar-valued trace
coincide.
\end{theorem}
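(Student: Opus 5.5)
Fix \(x\in U\) and \(H\in\m\), and study the one-real-variable path \(g(t)=F(x+tH)\) for \(t\in[0,1]\). Since \(x\in U\), every point \(x+tH\) lies in \(U^{\A}\) (its scalar part is \(x\)), so \(g\) is defined on a neighbourhood of \([0,1]\). By \(C^{N+1}\) regularity and the chain rule, \(g\in C^{N+1}\) with values in the finite-dimensional space \(\A\). Iterating \cref{lem:propagation} and \eqref{eq:hol} gives
\[
 g^{(k)}(t)=F^{(k)}(x+tH)H^k,\qquad 0\le k\le N+1.
\]
Indeed, \(\tfrac{d}{dt}F^{(j)}(x+tH)=\DR F^{(j)}(x+tH)[H]=F^{(j+1)}(x+tH)H\) for \(j\le N\), which uses holomorphy of \(F^{(0)},\dots,F^{(N)}\). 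Since \(H^{N+1}\in\m^{N+1}=0\), we get \(g^{(N+1)}\equiv0\). Taylor's theorem for a vector-valued \(C^{N+1}\) function of one real variable, with integral remainder, then yields the exact identity
\[
 F(x+H)=g(1)=\sum_{k=0}^N \frac{F^{(k)}(x)}{k!}H^k .
\]
This is the single use of Taylor's theorem announced in the introduction.

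Next we identify \(F^{(k)}(x)\) on the real axis. Take the real curve \(s\mapsto x+s\one\) with \(s\) real. Then \(F^{(k+1)}(x)=\DR F^{(k)}(x)[\one]=\tfrac{d}{ds}F^{(k)}(x+s)\big|_{s=0}\), so \(F^{(k)}|_U\) is the ordinary \(k\)-th derivative of the restriction \(F|_U\), by induction on \(k\). By \eqref{eq:trace}, \(F|_U=f\one\). Since \(F|_U\) is \(C^{N+1}\) as the composition of \(F\) with the affine map \(x\mapsto x\one\), and \(f=\pr\circ F|_U\) with \(\pr\) linear, we get \(f\in C^{N+1}(U,\R)\). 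Differentiating inside the closed subspace \(\R\one\) gives \(F^{(k)}(x)=f^{(k)}(x)\one\) for \(0\le k\le N+1\). Substituting this into the Taylor identity gives \eqref{eq:uniquejet}. Comparing with \eqref{eq:prolongation} for \(p=q=1\), whose sum involves \(H^k\) only up to \(k=N\), identifies the result with \((\JA f)(x+H)\).

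The uniqueness clause follows at once. If two such maps share the trace \(f\), both equal \(\JA f\) at every \(x+H\), and these points exhaust \(U^{\A}\).

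The main point to handle carefully is the derivative formula for \(g^{(k)}\). It needs holomorphy of each iterate \(F^{(j)}\), \(j\le N\), together with enough regularity to differentiate \(N+1\) times. This is exactly what iterating \cref{lem:propagation} from \(F\in C^{N+1}\) supplies: \(F^{(j)}\in C^{N+1-j}\) is holomorphic for \(j\le N\), with \(r=N+1-j\ge2\) when \(j\le N-1\). The last step only needs \(F^{(N)}\in C^1\), so that \(F^{(N+1)}\) exists and is continuous. The remaining steps are routine bookkeeping.
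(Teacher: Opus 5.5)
Your proof is correct and follows the paper's argument: the exact real Taylor expansion of the curve \(t\mapsto F(x+tH)\), whose \((N+1)\)-st derivative \(F^{(N+1)}(x+tH)H^{N+1}\) vanishes, combined with the identification \(F^{(k)}(x)=f^{(k)}(x)\one\) along the real axis. The differences are cosmetic. You present the two steps in the opposite order, and you obtain \(f\in C^{N+1}\) directly from \(f=\pr\circ F\circ(x\mapsto x\one)\) instead of through the paper's induction on difference quotients.
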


\begin{proof}
First, induction along the real axis gives
\begin{equation}\label{eq:axisderivatives}
 F^{(k)}(x)=f^{(k)}(x)\one,\qquad 0\le k\le N+1.
\end{equation}
For \(k=0\) this is \eqref{eq:trace}. If it holds at rank \(k\le N\), then
\[
 F^{(k+1)}(x)=\DR F^{(k)}(x)[\one]
 =\lim_{t\to0}\frac{f^{(k)}(x+t)-f^{(k)}(x)}{t}\,\one.
\]
The left-hand side exists, so the real derivative exists and the equality follows. Continuity
at the last rank follows by restricting the continuous map \(F^{(N+1)}\), proving
\(f\in C^{N+1}\).

Fix \(x\in U\), \(H\in\m\), and consider only the real-parameter curve
\(\Phi(t)=F(x+tH)\), \(0\le t\le1\). The ordinary real chain rule and
\cref{lem:propagation} give
\[
 \Phi^{(k)}(t)=F^{(k)}(x+tH)H^k,\qquad 0\le k\le N+1.
\]
Since \(H^{N+1}=0\), one has \(\Phi^{(N+1)}\equiv0\). Taylor's theorem with integral
remainder, applied componentwise to the finite-dimensional real-vector-valued function
\(\Phi\), therefore gives
\[
 \Phi(1)=\sum_{k=0}^{N}\frac{\Phi^{(k)}(0)}{k!};
\]
the integral remainder is identically zero. Substitution of
\eqref{eq:axisderivatives} proves \eqref{eq:uniquejet}. The last assertion follows pointwise.
\end{proof}

\begin{remark}[Exact location of Taylor's theorem]\label{rem:noTaylor}
Taylor's theorem in the proof is applied to \(t\mapsto F(x+tH)\), whose variable \(t\) is real.
No Taylor theorem for an \(\A\)-valued variable is assumed. Nilpotency does not establish
convergence of an infinite expansion; it annihilates the last derivative of this real curve and
makes the real Taylor formula exact.
\end{remark}

\begin{proposition}[Existence]\label{prop:existence}
If \(f\in C^{N+1}(U,\R)\), then \(\JA f\in C^1(U^{\A},\A)\) is
\(\A\)-holomorphic and
\begin{equation}\label{eq:derivativejet}
 \DR(\JA f)(Z)[K]=(\JA f')(Z)K.
\end{equation}
More generally, \(\JA f\in C^r\) whenever \(f\in C^{N+r}\).
\end{proposition}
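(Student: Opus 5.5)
The plan is to work directly from the explicit formula \eqref{eq:prolongation} with \(p=q=1\), written as \((\JA f)(x+H)=\sum_{k=0}^N \frac{f^{(k)}(x)}{k!}H^k\). We view \(\JA f\) as a map of the pair \((x,H)\in U\times\m\). Under the real-linear identification \(U^{\A}\cong U\times\m\), \(Z\mapsto(\pr Z,Z-\pr Z)\), this is a finite sum of products. Each product is a coefficient function \(x\mapsto f^{(k)}(x)/k!\), of class \(C^{N+1-k}\), times a polynomial map \(H\mapsto H^k\) into \(\A\), which is smooth. Hence every term lies in \(C^{N+1-k}\supseteq C^1\) for \(k\le N\), and \(\JA f\in C^1\). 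For the general regularity claim with \(f\in C^{N+r}\), the term of degree \(k\) is \(C^{N+r-k}\subseteq C^r\). This gives \(\JA f\in C^r\).

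Next we compute the real Fréchet differential. We split \(K\in\A\) as \(K=s\one+L\) with \(s=\pr K\) and \(L\in\m\). The partial derivative in \(x\) along \(s\) is \(s\sum_{k\le N}\frac{f^{(k+1)}(x)}{k!}H^k\). The derivative of \(H\mapsto H^k\) along \(L\) is \(kH^{k-1}L\), using commutativity. So
\[
\DR(\JA f)(x+H)[K]=s\sum_{k=0}^{N}\frac{f^{(k+1)}(x)}{k!}H^k+\sum_{k=1}^{N}\frac{f^{(k)}(x)}{(k-1)!}H^{k-1}L .
\]
Reindexing the second sum with \(j=k-1\) gives \(\sum_{j=0}^{N-1}\frac{f^{(j+1)}(x)}{j!}H^jL\).

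The main point, and the step needing care, is to match this with \((\JA f')(x+H)K=\sum_{j=0}^{N}\frac{f^{(j+1)}(x)}{j!}H^j(s\one+L)\). The \(s\)-parts agree term by term. For the \(L\)-part, the formula for \(\JA f'\) contains the extra term \(j=N\), namely \(\frac{f^{(N+1)}(x)}{N!}H^NL\). This term vanishes because \(H^NL\in\m^{N+1}=0\). This is exactly where the top-order term of \(\JA f'\) becomes harmless. It also explains why \(f\) must be \(C^{N+1}\): that regularity makes \(\JA f'\) well defined, since \(f'\in C^N\).

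With \eqref{eq:derivativejet} established, \(\DR(\JA f)(Z)\) is multiplication by the element \((\JA f')(Z)\in\A\). It is therefore \(\A\)-linear, which proves \(\A\)-holomorphy, with \((\JA f)'=\JA f'\). Throughout, we only differentiate polynomials in \(H\) and \(C^m\) real functions of \(x\). No \(\A\)-variable Taylor theorem is needed, consistent with \cref{rem:noTaylor}.
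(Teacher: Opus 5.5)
Your proposal is correct and follows essentially the same route as the paper. You differentiate the finite jet formula in the underlying real variables, split \(K=\pr(K)\one+L\) with \(L\in\m\), and observe that the only unmatched term, \(\frac{f^{(N+1)}(x)}{N!}H^NL\), lies in \(\m^{N+1}=0\). Your term-by-term regularity count (the degree-\(k\) term is of class \(C^{N+r-k}\subseteq C^r\)) is a direct version of the paper's ``repetition'' step.
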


\begin{proof}
Differentiate the finite expression \eqref{eq:prolongation} in the underlying real vector
space. Write \(K=\pr(K)\one+K_0\) with \(K_0\in\m\). After the scalar and nilpotent
contributions are combined, the only unmatched terms have total nilpotent degree \(N+1\),
and hence vanish. The remaining expression is exactly the right-hand side of
\eqref{eq:derivativejet}. Repetition proves the regularity statement.
\end{proof}

In particular, if existence and the \(C^{N+1}\) uniqueness class are required simultaneously,
the sufficient scalar regularity is \(f\in C^{2N+1}\). On \(C^\infty\), restriction and finite
prolongation are inverse bijections between smooth scalar-trace \(\A\)-holomorphic functions
and smooth real functions. The finite regularity assumed in \cref{thm:uniqueness} is sufficient
for the stated proof; no optimality claim is made here.

\begin{remark}[Why scalar-valued trace is essential]
Holomorphy alone is insufficient. Choose \(0\ne C\in\Ann(\m)\), which is possible because
\(\m^N\subseteq\Ann(\m)\). For any \(g\in C^1(U,\R)\),
\[
 G(Z)=g(\pr Z)C
\]
is \(\A\)-holomorphic: \(\DR G(Z)[K]=g'(\pr Z)\pr(K)C=g'(\pr Z)CK\).
Its trace \(g(x)C\) is generally not scalar-valued. This is the nilpotent freedom already
visible in the classical Scheffers representation \cite{Scheffers1893}.
\end{remark}

\section{Products, compositions, and transfer of identities}

The finite prolongation is functorial. This supplies the precise conditions under which a
composition is preserved.

\begin{theorem}[Finite-jet calculus]\label{thm:functoriality}
Let \(U\subset\R^p\) and \(V\subset\R^q\) be open. Then:
\begin{enumerate}[label=\textup{(\roman*)}]
\item for \(f,h\in C^N(U,\R)\),
\(\JA(f+h)=\JA f+\JA h\) and \(\JA(fh)=(\JA f)(\JA h)\);
\item if, in addition, \(f(x)\ne0\) on \(U\), then
\(\JA(f^{-1})=(\JA f)^{-1}\);
\item if \(f\in C^N(U,\R^q)\), \(f(U)\subset V\), and
      \(g\in C^N(V,\R^r)\), then, for every \(X\in U^{\A}\),
\begin{equation}\label{eq:composition}
 \JA(g\circ f)(X)=(\JA g)((\JA f)(X)).
\end{equation}
\end{enumerate}
For vector- and matrix-valued maps, these statements hold componentwise, with every product
formed in the indicated finite-dimensional associative algebra.
\end{theorem}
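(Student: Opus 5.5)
The plan is to work directly with the finite formula \eqref{eq:prolongation}, viewing \((\JA f)(x+H)\) as the degree-\(\le N\) Taylor polynomial \(T^N_xf\) evaluated at \(H\in\m^p\). Since \(\m^{N+1}=0\), the assignment \(P\mapsto P(H)\) from real polynomials in \(p\) variables to \(\A\) is an algebra morphism that annihilates every monomial of degree \(>N\). It therefore factors through the truncated polynomial algebra \(\R[y_1,\ldots,y_p]/(y)^{N+1}\), i.e.\ the algebra of \(N\)-jets at \(x\). Each statement then reduces to a classical identity among \(N\)-jets of real \(C^N\) functions, followed by application of this morphism. Pure jet algebra needs only \(C^N\), because \(N\)-jets are well defined for \(C^N\) maps.

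\emph{Step (i).} Linearity is immediate from \eqref{eq:prolongation}. For products, use the Leibniz rule \(\partial^\alpha(fh)=\sum_{\beta\le\alpha}\binom{\alpha}{\beta}\partial^\beta f\,\partial^{\alpha-\beta}h\). Then
\[
T^N_x(fh)\equiv T^N_xf\cdot T^N_xh \pmod{(y)^{N+1}},
\]
and evaluating at \(H\) gives \(\JA(fh)=(\JA f)(\JA h)\): the cross terms of total degree \(>N\) in the product of the two finite sums vanish because \(H^\alpha\in\m^{|\alpha|}\).

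\emph{Step (ii).} Apply (i) to \(f\cdot f^{-1}=1\), noting \(f^{-1}\in C^N\) when \(f\ne0\). Since \(\JA 1=\one\), this gives \((\JA f)(\JA f^{-1})=\one\). Commutativity of \(\A\) makes this a two-sided inverse, and inverses are unique.

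\emph{Step (iii).} First check that the right-hand side makes sense. Since \(\pr\) is an algebra morphism, \(\pr((\JA f)(X))=f(\pr X)\in V\), so \((\JA f)(X)\in V^{\A}\). Write \(X=x+H\) and \((\JA f)(X)=f(x)+K\) with
\[
K=\sum_{1\le|\alpha|\le N}\tfrac{1}{\alpha!}\partial^\alpha f(x)H^\alpha\in\m^q .
\]
Then \((\JA g)(f(x)+K)=\sum_{|\gamma|\le N}\tfrac{1}{\gamma!}\partial^\gamma g(f(x))K^\gamma\). This is the evaluation at \(H\) of the polynomial obtained by substituting \(T^N_xf-f(x)\), which has no constant term, into \(T^N_{f(x)}g\). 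The key classical input is that this substituted polynomial agrees with \(T^N_x(g\circ f)\) modulo \((y)^{N+1}\). This is the jet form of the chain rule, equivalently Faà di Bruno's formula, valid for \(C^N\) maps.

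The main obstacle is establishing that congruence at the minimal regularity \(C^N\) without invoking analyticity. I would argue via Taylor's theorem with Peano remainder. First, \(f(x+y)=f(x)+P(y)+o(|y|^N)\) with \(P=T^N_xf-f(x)\). Second, \(g(f(x)+w)=T^N_{f(x)}g(w)+o(|w|^N)\). Substituting \(w=P(y)+o(|y|^N)\), and using \(|w|=O(|y|)\) together with the local Lipschitz property of the polynomial \(T^N g\), shows that \(g(f(x+y))-T^N_{f(x)}g(P(y))=o(|y|^N)\). Two polynomials of degree \(\le N\) that agree up to \(o(|y|^N)\) have the same coefficients. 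Hence the truncations agree, which yields \eqref{eq:composition}.

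Vector- and matrix-valued versions follow componentwise. A matrix product is a finite sum of products of entries, so these statements reduce to (i).
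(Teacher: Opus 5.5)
Your proof is correct and follows essentially the same route as the paper: linearity plus the Leibniz rule for (i), with terms of degree $>N$ killed by $\m^{N+1}=0$; multiplicativity applied to $f\cdot f^{-1}=1$ for (ii); and substitution of the finite jet of $f$ into the degree-$N$ Taylor polynomial of $g$ for (iii). The only difference is that the paper cites the multivariate chain rule (Fa\`a di Bruno) to match the coefficients of degree at most $N$, whereas you prove this jet-level chain rule at $C^N$ regularity using Peano remainders and uniqueness of Taylor polynomials, which is a valid self-contained substitute.
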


\begin{proof}
Addition is immediate. The multivariate Leibniz rule makes the coefficients of
\(\JA(fh)\) equal to those in the finite Cauchy product of \(\JA f\) and \(\JA h\); all terms
of nilpotent degree exceeding \(N\) vanish. Applying multiplicativity to \(ff^{-1}=1\) proves
(ii); invertibility follows because an element of \(\A\) is invertible precisely when its scalar
part is nonzero.

For (iii), at each real base point take the ordinary degree-\(N\) real Taylor polynomial of
\(g\) and substitute the finite jet of \(f\). The multivariate chain rule (equivalently, the
Fa\`a di Bruno formula) identifies every coefficient of total degree at most \(N\) with the
corresponding derivative of \(g\circ f\); higher-degree terms lie in \(\m^{N+1}\) and vanish.
Only finite polynomials and derivatives of real-variable maps occur in this argument.
\end{proof}

Notice that \((\JA f)(X)\in V^{\A}\) is automatic, because
\(\pr((\JA f)(X))=f(\pr X)\in V\). For a locally defined or multivalued outer function, one
single-valued branch must be fixed on \(V\); with this understood, the composition law applies
without an additional restriction on the nilpotent part.

\begin{corollary}[Exact transfer of identities]\label{cor:identities}
Let \(P:\R^s\to\R^r\) be a polynomial map and let
\(f_1,\ldots,f_s\in C^N(U)\). If
\[
 P(f_1(x),\ldots,f_s(x))=0\qquad(x\in U),
\]
then
\[
 P(\JA f_1(X),\ldots,\JA f_s(X))=0\qquad(X\in U^{\A}).
\]
More generally, let \(f=(f_1,\ldots,f_s)\), let \(V\subset\R^s\) be open with
\(f(U)\subset V\), and let \(h\in C^N(V,\R^r)\) be a fixed single-valued map. Then
\[
 h\circ f=0\quad\Longrightarrow\quad
 (\JA h)\circ(\JA f)=\JA(h\circ f)=0.
\]
Thus a nonpolynomial outer map is evaluated through its finite prolongation; any required
branch is fixed on \(V\) before prolongation.
\end{corollary}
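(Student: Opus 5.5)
The corollary follows from \cref{thm:functoriality}, applied once the identity \(P\circ f=0\) (respectively \(h\circ f=0\)) is read as the statement that a composite map vanishes identically. Since the polynomial case is a special case of the general one, I will prove the general statement first and then specialise.

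\emph{General case.} Set \(f=(f_1,\ldots,f_s)\in C^N(U,\R^s)\). Then \(f(U)\subset V\) and \(h\in C^N(V,\R^r)\).
\begin{enumerate}[label=\textup{(\alph*)}]
\item By \cref{thm:functoriality}(iii), for every \(X\in U^{\A}\) we have \((\JA h)((\JA f)(X))=\JA(h\circ f)(X)\). This is well defined, since \(\pr((\JA f)(X))=f(\pr X)\in V\), as noted after the theorem.
\item By hypothesis \(h\circ f\) is the zero map of class \(C^N\) on \(U\). Every partial derivative \(\partial^\alpha(h\circ f)\) with \(|\alpha|\le N\) therefore vanishes identically on the open set \(U\).
\item Definition \eqref{eq:prolongation} then gives \(\JA(h\circ f)(x+H)=0\) for all \(x\in U\) and \(H\in\m^s\). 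This proves \((\JA h)\circ(\JA f)=\JA(h\circ f)=0\).
\end{enumerate}
Here \(\JA f\) is understood componentwise, \(\JA f=(\JA f_1,\ldots,\JA f_s)\), as the final sentence of \cref{thm:functoriality} allows. The fixed single-valued branch of \(h\) on \(V\) enters only through the hypothesis that \(h\) is a map on \(V\); nothing further is required.

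\emph{Polynomial case.} Take \(V=\R^s\) and \(h=P\), which is smooth. It remains to identify \(\JA P\) with the algebraic evaluation of \(P\) over \(\A\), that is, to show \((\JA P)(Y_1,\ldots,Y_s)=P(Y_1,\ldots,Y_s)\) with all sums and products computed in \(\A\).
\begin{enumerate}[label=\textup{(\alph*)}]
\item The coordinate functions satisfy \(\JA(y_j)(y+H)=y_j+H_j\), since only the first-order term survives in \eqref{eq:prolongation}.
\item Constants prolong to themselves times \(\one\).
\item \cref{thm:functoriality}(i) shows that \(\JA\) respects sums and products. Induction on the monomials, applied componentwise in \(r\), then gives \(\JA P=P\) evaluated over \(\A\).
\end{enumerate}
One could instead note that the degree-\(\le N\) Taylor expansion of a polynomial, evaluated at \(y+H\), reproduces \(P(y+H)\) exactly. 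The reason is that the Taylor expansion of a polynomial is a finite algebraic identity valid in any commutative algebra, and the terms of degree \(>N\) in \(H\) vanish in \(\A\). Combining this identification with the general case gives \(P(\JA f_1(X),\ldots,\JA f_s(X))=0\).

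I expect no serious obstacle. The only point needing care is the identification \(\JA P=P\) over \(\A\), so that the conclusion is literally the algebraic identity over \(\A\) and not merely a statement about prolongations. That identification rests on multiplicativity, which is already established in \cref{thm:functoriality}(i).
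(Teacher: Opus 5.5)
Your proof is correct and matches what the paper intends. The paper gives no separate proof and presents the corollary as an immediate consequence of \cref{thm:functoriality}: part (iii) is applied to \(h\circ f\equiv0\), whose finite jet vanishes because all its derivatives do, and sums and products handle the polynomial case. The only difference is cosmetic: you pass the polynomial case through (iii) and then identify \(\JA P\) with algebraic evaluation over \(\A\), whereas part (i) together with the constant case already gives \(\JA(P\circ f)=P(\JA f_1,\ldots,\JA f_s)\) directly.
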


This is the precise conservation principle used in kinematics. Orthogonality, unit-quaternion
constraints, determinant identities, matrix inverse identities, and the composition laws of
motions are preserved because they are identities built from finite algebraic operations and
admissible compositions. For a time-dependent family, the additional requirement that its
coefficients are the successive derivatives of one real motion is expressed as follows.

\begin{definition}[Temporal holonomicity of a family]\label{def:holonomicity}
Let \(G\subset\R^r\) be a smooth matrix group, let \(I\subset\R\) be an open interval,
and put \(\A_N=\R[\varepsilon]/(\varepsilon^{N+1})\). Consider a family
\[
 X(t)=\sum_{k=0}^{N}\varepsilon^kX_k(t),\qquad
 X_k\in C^{N-k}(I,\R^r),\quad X_0(t)\in G.
\]
The family is \emph{temporally holonomic} if
\[
 X_k(t)=\frac{1}{k!}X_0^{(k)}(t)\quad(0\le k\le N),
 \qquad\text{equivalently}\qquad
 X(t)=(J_{\A_N}X_0)(t+\varepsilon).
\]
Equivalently, its coefficients satisfy
\(\dot X_k=(k+1)X_{k+1}\) for \(0\le k<N\).
\end{definition}

For example, if \(0\ne B\in\so(3,\R)\) and \(\varepsilon^2=0\), then
\(Q(t)=I+\varepsilon B\) belongs to \(\SO(3,\A_1)\) for every \(t\), but this family
is not temporally holonomic: its real part is the constant curve \(I\), whose derivative
is zero. This is a compatibility condition on a family, not an obstruction to realising
an individual point as a curve jet; the latter is made explicit for rotations in
\cref{sec:kinematics}.

\section{Finite functional calculus over a Weil algebra}

For a real \(C^N\) function \(f:I\to\R\), its value at \(Z=x+H\in I^{\A}\) is, by definition,
\((\JA f)(Z)\). Thus the usual elementary functions have the following finite expressions:
\begin{align}
 \exp_{\A}(x+H)&=e^x\sum_{k=0}^{N}\frac{H^k}{k!},\label{eq:expfinite}\\
 \sin_{\A}(x+H)&=\sum_{k=0}^{N}\frac{\sin(x+k\pi/2)}{k!}H^k,\\
 \cos_{\A}(x+H)&=\sum_{k=0}^{N}\frac{\cos(x+k\pi/2)}{k!}H^k,\\
 (x+H)^{-1}&=x^{-1}\sum_{k=0}^{N}(-H/x)^k,\qquad x\ne0,\\
 \log_{\A}(x+H)&=\log x+
 \sum_{k=1}^{N}\frac{(-1)^{k-1}}{k\,x^k}H^k,\qquad x>0,\\
 (x+H)^\alpha&=\sum_{k=0}^{N}
 \frac{\alpha(\alpha-1)\cdots(\alpha-k+1)}{k!}
 x^{\alpha-k}H^k,\quad x>0.\label{eq:powerfinite}
\end{align}
The empty product in \eqref{eq:powerfinite} is one. Every displayed sum is finite. The
logarithm and real power require the stated domain; a single branch must be selected before
prolongation whenever one is required.

By \cref{thm:functoriality}, familiar identities are exact whenever their real domains are
respected, for example
\[
 \sin_{\A}^2 Z+\cos_{\A}^2Z=1,
 \qquad \exp_{\A}(Z+W)=\exp_{\A}(Z)\exp_{\A}(W),
\]
the second identity using commutativity of \(\A\). These are consequences of finite-jet
functoriality.

\section{Rodrigues formula over a Weil algebra}

Let \(M_3(\A)\) denote the \(3\times3\) matrices over \(\A\), and
\[
 \so(3,\A)=\{B\in M_3(\A):B^T=-B\},\qquad
 \SO(3,\A)=\{Q:Q^TQ=I,\ \det Q=1\}.
\]
Throughout this section, \(\Exp\) denotes the finite Weil prolongation of the real matrix
exponential. Whenever a single real branch of the matrix logarithm is fixed on an open
neighbourhood, \(\Log\) denotes its finite Weil prolongation.

For \(\phi=(\phi_1,\phi_2,\phi_3)^T\in\A^3\), let
\[
 \wide{\phi}=\begin{pmatrix}
 0&-\phi_3&\phi_2\\ \phi_3&0&-\phi_1\\-\phi_2&\phi_1&0
 \end{pmatrix}.
\]
Because \(\A\) is commutative, the usual cross-product calculation remains valid:
\begin{equation}\label{eq:cubic}
 \wide{\phi}^{2}=\phi\phi^T-(\phi^T\phi)I,
 \qquad \wide{\phi}^{3}=-(\phi^T\phi)\wide{\phi}.
\end{equation}

For real \(\phi=\theta u\), where \(\|u\|=1\), define the left Jacobian \(J(\phi)\) by
right-trivialising the differential of the exponential:
\begin{equation}\label{eq:leftJacobianDefinition}
 \begin{aligned}
 (\mathrm d\exp)_{\wide\phi}(\wide c)\exp(-\wide\phi)
   &=\wide{J(\phi)c},\\
 J(\theta u)
   &=I+\frac{1-\cos\theta}{\theta^2}\,\wide{\theta u}
     +\frac{\theta-\sin\theta}{\theta^3}\,\wide{\theta u}^{2},
 \end{aligned}
\end{equation}
with the continuous value \(J(0)=I\). In particular,
\begin{equation}\label{eq:JacobianConsequences}
 \det J(\theta u)=\left(\frac{2\sin(\theta/2)}{\theta}\right)^2,
 \qquad J(2k\pi u)=uu^T\quad(k\in\mathbb Z\setminus\{0\}).
\end{equation}

Define real functions \(a,b:\R\to\R\) by
\begin{align*}
 a(s)&=\begin{cases}
 \sin\sqrt{s}/\sqrt{s},&s>0,\\1,&s=0,\\
 \sinh\sqrt{-s}/\sqrt{-s},&s<0,
 \end{cases}\\
 b(s)&=\begin{cases}
 (1-\cos\sqrt{s})/s,&s>0,\\1/2,&s=0,\\
 (\cosh\sqrt{-s}-1)/(-s),&s<0.
 \end{cases}
\end{align*}
These functions are smooth on \(\R\). One proof involving no power series is to let
\(c_s(t)\) be the solution of
\[
 \partial_t^2c_s+s c_s=0,\qquad c_s(0)=1,\quad \partial_t c_s(0)=0.
\]
Smooth dependence of this real initial-value problem on the parameter \(s\), together with
\[
 a(s)=\int_0^1c_s(t)\,dt,
 \qquad b(s)=\int_0^1(1-t)c_s(t)\,dt,
\]
proves the assertion and gives the displayed trigonometric and hyperbolic forms, including
their values at zero. For \(S=s_0+K\in\A\), define
\(a_{\A}(S)=\JA a(S)\) and
\(b_{\A}(S)=\JA b(S)\) by \eqref{eq:prolongation}. This definition again uses only finite jets.

\begin{theorem}[Exact Weil--Rodrigues formula]\label{thm:Rodrigues}
For every \(B=\wide{\phi}\in\so(3,\A)\), with \(S=\phi^T\phi\), the matrix exponential
obtained by finite Weil prolongation of the real matrix exponential satisfies
\begin{equation}\label{eq:Rodrigues}
 \Exp(B)=I+a_{\A}(S)B+b_{\A}(S)B^2\in\SO(3,\A).
\end{equation}
This is an exact algebraic identity in \(M_3(\A)\).
\end{theorem}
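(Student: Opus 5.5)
The plan is to reduce \eqref{eq:Rodrigues} to a real identity between smooth maps on an open set, and then transfer it with \cref{cor:identities}.

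\textbf{Step 1: the real identity.} Regard $\phi\mapsto\exp(\wide\phi)$ as a smooth map $\R^3\to M_3(\R)$. Compose $\exp$ with the linear map $\phi\mapsto\wide\phi$; by \cref{thm:functoriality}(iii), the prolongation of a linear map is the same linear map over $\A$. Hence $\Exp(\wide\phi)$ is the finite Weil prolongation, in the variable $\phi\in\A^3$, of $E(\phi):=\exp(\wide\phi)$. On $\R^3$ the classical Rodrigues formula gives
\[
 E(\phi)=I+a(\phi^T\phi)\wide\phi+b(\phi^T\phi)\wide\phi^{2}.
\]
I would prove this without series manipulations of the nilpotent argument. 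Put $R(t)=I+a(t^2s)t\wide\phi+b(t^2s)t^2\wide\phi^2$ with $s=\phi^T\phi$. Use \eqref{eq:cubic} and the ODE characterisation of $a$ and $b$ (in terms of $c_s$) to check that $R'=\wide\phi R$ and $R(0)=I$. Uniqueness for linear ODEs then gives $R(1)=E(\phi)$. It is easier still to quote the classical formula directly, since $a,b$ are the trigonometric/hyperbolic expressions displayed and $s\ge0$ here.

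\textbf{Step 2: transfer.} Define the smooth map $h:\R^3\to M_3(\R)$ by
\[
 h(\phi)=E(\phi)-I-a(\phi^T\phi)\wide\phi-b(\phi^T\phi)\wide\phi^2 .
\]
This $h$ vanishes identically on $U=\R^3$. Write $h$ as a polynomial $P$ in the entries of $E(\phi)$, in $a(\phi^T\phi)$ and $b(\phi^T\phi)$, and in the components $\phi_i$. Then \cref{cor:identities} gives the vanishing of $P$ evaluated at the prolongations. The prolongation of $\phi\mapsto a(\phi^T\phi)$ is $a_{\A}(\phi^T\phi)$. This holds by \cref{thm:functoriality}(iii), because $\phi\mapsto\phi^T\phi$ is a polynomial whose prolongation is $\phi^T\phi$ computed in $\A$ (by part (i)). The same argument applies to $b$. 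The prolongation of each $\phi_i$ is $\phi_i$ itself. This yields \eqref{eq:Rodrigues} for all $\phi\in\A^3$.

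\textbf{Step 3: membership in $\SO(3,\A)$.} The real identities $E^TE=I$ and $\det E=1$ are polynomial in the entries of $E$. Transferring them by \cref{cor:identities} gives $\Exp(B)^T\Exp(B)=I$ and $\det\Exp(B)=1$. Finally, every $B\in\so(3,\A)$ is of the form $\wide\phi$, with the entries read off from $B$, so the statement covers all of $\so(3,\A)$.

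\textbf{Main obstacle.} The delicate point is the bookkeeping in Step 2. The corollary is stated for compositions $h\circ f$ with $h$ of class $C^N$. I would take $f(\phi)=(\phi,\,E(\phi),\,a(\phi^T\phi),\,b(\phi^T\phi))$ and $h=P$ polynomial. I must then verify, via \cref{thm:functoriality}(iii), that each component of $\JA f$ is exactly what appears in \eqref{eq:Rodrigues}. In particular, $a_{\A}(S)$ is defined by one-variable prolongation of $a$ at $S=\phi^T\phi$, and I need this to equal the prolongation of the composite $\phi\mapsto a(\phi^T\phi)$. Both $a$ and $b$ are smooth, so the regularity hypotheses are satisfied.
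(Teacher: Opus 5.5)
Your proposal is correct and takes essentially the same route as the paper: you establish the real Rodrigues identity for \(\exp(\wide{v})\), transfer it by finite prolongation using \cref{thm:functoriality} and \cref{cor:identities}, and prolong \(R^TR=I\) and \(\det R=1\) to get membership in \(\SO(3,\A)\). You also check explicitly, via the composition law, that the prolongation of \(\phi\mapsto a(\phi^T\phi)\) is \(a_{\A}(\phi^T\phi)\) (and likewise for \(b\)); this is bookkeeping the paper's terse proof leaves implicit.
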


\begin{proof}
For a real vector \(v\), the ordinary Rodrigues identity is
\[
 \exp(\wide v)=I+a(v^Tv)\wide v+b(v^Tv)\wide v^{2}.
\]
Both sides are smooth real matrix-valued functions of the three coordinates of \(v\).
Apply the finite prolongation and use \cref{thm:functoriality,cor:identities}. This gives
\eqref{eq:Rodrigues} without introducing a matrix series over \(\A\). Prolonging the real
identities \(R^TR=I\) and \(\det R=1\) gives \(\Exp(B)\in\SO(3,\A)\).
\end{proof}

Equation \eqref{eq:cubic} explains algebraically why only \(I,B,B^2\) can occur, but it does
not by itself identify the coefficients. Their identification in \eqref{eq:Rodrigues} comes from
the finite prolongation of the ordinary real Rodrigues identity. Thus the coefficients are
determined directly by identity transfer, without assigning an infinite series to a multidual
matrix variable.

\begin{corollary}[Unit-axis form for a nonzero real generator]\label{cor:axis}
Suppose that \(B=\wide{\phi}\), let \(S=\phi^T\phi\), and assume that
\(s_0=\pr(S)=\pr(\phi)^T\pr(\phi)>0\). Put
\[
 \theta=\JA(\sqrt{\cdot})(S),\qquad u=\theta^{-1}\phi.
\]
Then \(\theta\) is invertible, \(u^Tu=1\), and
\begin{equation}\label{eq:unitRodrigues}
 \Exp(B)=I+\sin_{\A}(\theta)\wide u+
 (1-\cos_{\A}(\theta))\wide u^{2}.
\end{equation}
In particular, for a chosen real generator with
\(0<\theta_0=\pr(\theta)=\sqrt{s_0}\le\pi\), this normalisation is regular,
including at \(\theta_0=\pi\).
\end{corollary}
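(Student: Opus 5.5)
The argument rests on \cref{thm:Rodrigues}, the functional calculus of \cref{thm:functoriality}, and the transfer principle of \cref{cor:identities}, applied on the open half-line \(s>0\), where \(\sqrt{\cdot}\) is smooth and single-valued.

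\emph{Invertibility and normalisation.} By \cref{thm:functoriality}(iii) and the remark following it, \(\pr(\theta)=\sqrt{s_0}>0\). Since an element of \(\A\) is invertible exactly when its scalar part is nonzero, \(\theta\) is invertible and \(u=\theta^{-1}\phi\) is defined. On \(s>0\) the real identity \((\sqrt{s})^2=s\) is polynomial in the pair \((\sqrt s,s)\). Transferring it with \cref{cor:identities}, taking the coordinate function \(s\mapsto s\), whose prolongation is the identity, gives \(\theta^2=S\). Commutativity of \(\A\) then yields
\[
 u^Tu=\theta^{-2}\phi^T\phi=\theta^{-2}S=1 .
\]

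\emph{Rewriting the coefficients.} On \(s>0\) the real identities
\[
 a(s)\sqrt s=\sin\sqrt s,\qquad b(s)\,s=1-\cos\sqrt s
\]
hold by the definition of \(a,b\). They have the form \(h\circ f=0\), with \(f(s)=(s,a(s),b(s),\sqrt s)\) and \(h\) built from products and from \(\sin,\cos\) acting on the last coordinate. Applying \cref{cor:identities} on \(U=(0,\infty)\), with the composition law \cref{thm:functoriality}(iii) giving \(\JA(\sin\circ\sqrt{\cdot})(S)=\sin_{\A}(\theta)\), we obtain
\[
 a_{\A}(S)\theta=\sin_{\A}\theta,\qquad b_{\A}(S)\theta^2=1-\cos_{\A}\theta .
\]

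\emph{Substitution.} Now \(B=\wide{\phi}=\theta\wide u\) by linearity of \(\wide{\cdot}\) over \(\A\), and \(B^2=\theta^2\wide u^2\). Substituting into \eqref{eq:Rodrigues} gives \eqref{eq:unitRodrigues}.

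\emph{Regularity at \(\theta_0\le\pi\).} The construction uses only \(s_0>0\), that is \(\theta_0>0\). It does not involve inverting \(\sin\theta\) or \(J(\theta u)\), both of which degenerate at \(\theta_0=\pi\) or at \(2k\pi\). The normalisation \(u=\theta^{-1}\phi\) is therefore a smooth finite jet, with no singularity at \(\theta_0=\pi\).

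The step that needs the most care is the domain bookkeeping in the coefficient rewriting. One must check that the identities relating \(a,b\) to \(\sin,\cos\) of \(\sqrt s\) are applied only where \(\sqrt{\cdot}\) is a fixed single-valued smooth branch, namely \(s>0\). This is exactly what the hypothesis \(s_0>0\) guarantees, since \(S\in(0,\infty)^{\A}\). One must also confirm that \(\sin_{\A}(\theta)\) as defined in the functional calculus agrees with \(\JA(\sin\circ\sqrt{\cdot})(S)\), which is precisely the composition law \eqref{eq:composition}.
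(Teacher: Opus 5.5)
Your proposal is correct and follows essentially the same route as the paper. You get invertibility from \(\pr(\theta)=\sqrt{s_0}>0\), obtain \(\theta^2=S\) and hence \(u^Tu=1\) by transfer, then prolong the real identities relating \(a,b\) to \(\sin,\cos\) and substitute into \eqref{eq:Rodrigues}. The only cosmetic difference is that the paper prolongs these identities in the angle variable, \(a(t^2)t=\sin t\) and \(b(t^2)t^2=1-\cos t\) for \(t>0\), evaluated at \(\theta\), whereas you prolong them in \(s\) with \(\sqrt{\cdot}\) inside, evaluated at \(S\); both rest on the same composition law.
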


\begin{proof}
Since \(\pr(\theta)=\sqrt{s_0}>0\), \(\theta\) is invertible in \(\A\). Functoriality gives
\(\theta^2=S\), whence \(u^Tu=S/\theta^2=1\). Prolonging the real identities
\[
 a(t^2)t=\sin t,
 \qquad b(t^2)t^2=1-\cos t \qquad (t>0)
\]
gives their \(\A\)-valued counterparts. Substitution into \eqref{eq:Rodrigues} proves
\eqref{eq:unitRodrigues}.
\end{proof}

\section{Every proper orthogonal Weil tensor is exponential}

Entrywise prolongation identifies \(\SO(3,\A)\) with the \(\A\)-points of the real
algebraic Lie group \(\SO(3)\), and its Lie algebra is
\(\so(3,\A)=\so(3,\R)\otimes_{\R}\A\). Reduction modulo \(\m\), together with the
constant inclusion, gives the split exact sequence
\begin{equation}\label{eq:reductionSequence}
 1\longrightarrow K_{\A}\longrightarrow\SO(3,\A)
 \xrightarrow{\,\pr_{\A}\,}\SO(3,\R)\longrightarrow1,
 \qquad K_{\A}=\{Q:\pr_{\A}(Q)=I\}.
\end{equation}
This is the standard Weil--Lie group structure associated with a Weil algebra
\cite{Bertram2014}. The following theorem determines its exponential geometry.

\begin{theorem}[Surjectivity of the exponential]\label{thm:surjectivity}
For every \(Q\in\SO(3,\A)\), there exists \(B\in\so(3,\A)\) such that
\(Q=\Exp(B)\). Hence every proper orthogonal \(3\times3\) tensor over a Weil algebra has an exact
unnormalised Rodrigues representation \eqref{eq:Rodrigues}.
\end{theorem}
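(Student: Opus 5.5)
We reduce surjectivity to the real case plus a computation in the kernel \(K_{\A}\), using the split sequence \eqref{eq:reductionSequence}. Given \(Q\in\SO(3,\A)\), put \(R=\pr_{\A}(Q)\in\SO(3,\R)\) and fix a real generator \(B_0=\wide{\phi_0}\in\so(3,\R)\) with \(\exp(B_0)=R\) and \(\|\phi_0\|\le\pi\). This is the classical surjectivity of the real exponential. Then \(Q=R\,Q_1\) with \(Q_1:=R^{-1}Q\in K_{\A}\). The naive idea is to write \(Q_1=\Exp(C)\) and combine \(\exp(B_0)\Exp(C)\) into a single exponential. That fails in general, because \(B_0\) and \(C\) need not commute. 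I would therefore look for \(B=B_0+\wide{\eta}\) with \(\eta\in\m^3\) and solve \(\Exp(B_0+\wide\eta)=Q\) order by order in the \(\m\)-adic filtration.

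The key tool is the differential of the exponential, transported by finite prolongation. Let \(\eta\in\m^3\). Consider the real-parameter curve \(t\mapsto\Exp(B_0+t\wide\eta)\) and use \cref{thm:functoriality} for the multivariate jet of \(\exp\) at the real point \(B_0\). Modulo \(\m^{j+1}\), when \(\eta\in\m^j\), this gives
\[
 \Exp(B_0+\wide\eta)\exp(-B_0)\equiv I+\wide{J(\phi_0)\eta}\pmod{\m^{j+1}},
\]
because all higher jet terms are at least quadratic in \(\eta\) and hence lie in \(\m^{2j}\subseteq\m^{j+1}\). The inductive step runs as follows. Suppose \(\Exp(B^{(j)})^{-1}Q\equiv I+\wide{\xi}\pmod{\m^{j+1}}\) with \(\xi\in\m^j\). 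The error has this skew form because it lies in \(K_{\A}\) and orthogonality, linearised modulo \(\m^{j+1}\), forces skew-symmetry. We then want to correct by \(\eta\in(\m^j)^3\) with \(J(\phi_0)\eta\equiv\xi\), after suitable left/right trivialisation bookkeeping. Since \(N\) is finite, this terminates after \(N\) steps and gives an exact \(B\). The Rodrigues statement then follows from \cref{thm:Rodrigues}.

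The main obstacle is the solvability of \(J(\phi_0)\eta=\xi\). By \eqref{eq:JacobianConsequences}, \(\det J(\phi_0)=(2\sin(\theta_0/2)/\theta_0)^2\), so \(J\) is invertible exactly when \(\theta_0\notin2\pi\mathbb Z\setminus\{0\}\). With \(\|\phi_0\|\le\pi\) it is invertible, and each step is solved by \(\eta=J(\phi_0)^{-1}\xi\), taken entrywise in \(\m^j\). The case \(\theta_0=\pi\) causes no trouble because \(\det J(\pi u)=4/\pi^2\neq0\). For \(R=I\) we take \(\phi_0=0\), so \(J=I\). In that case one can alternatively apply a fixed branch of \(\Log\) near \(I\) via \cref{cor:identities}, giving \(B=\Log(Q)\) directly, which serves as a check. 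The one point I would verify carefully is the error term. When passing from \(\Exp(B^{(j)}+\wide\eta)\) to \(\Exp(B^{(j)})(I+\wide{\cdot})\), the base point \(B^{(j)}\) is no longer real. I expect to handle this by noting that the linearisation coefficient is \(J(\phi_0)\) modulo \(\m\), and any discrepancy is pushed into \(\m^{j+1}\). Choosing the real generator in the ball \(\|\phi_0\|\le\pi\), away from the singular shells \(2k\pi\), is exactly what makes the argument work.
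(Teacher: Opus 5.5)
Your argument is correct, but it proves surjectivity by a different route from the paper.

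\textbf{The paper's route.} The paper does no order-by-order analysis. It applies the inverse function theorem once, to the auxiliary map \(\Psi(B,S)=\exp(B)\exp(S)\) on \(\so(3,\R)\times\operatorname{Sym}(3,\R)\). The symmetric factor is there only so that source and target are open sets of equal dimension. The paper then prolongs the smooth local inverse in a single step, \((B,S)=(\JA\Psi^{-1})(Q)\). Finally it shows \(S=0\) by prolonging \(T=\tfrac12\log(M^TM)\) and using \(Q^TQ=I\).

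\textbf{Your route.} You do a Hensel--Newton lift through the filtration \(\m\supset\m^2\supset\cdots\supset\m^{N+1}=0\). At each of the \(N\) steps you solve a linear system with the fixed real matrix \(J(\phi_0)\).

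\textbf{The two points you flagged both go through.}
\begin{enumerate}[label=\textup{(\roman*)}]
\item Non-real base point. Expand \(\Exp\) at the real point \(B_0\) with increment \((B^{(j)}-B_0)+\wide{\eta}\). Every term in which \(\eta\) appears together with another nilpotent factor lies in \(\m\cdot\m^j\subseteq\m^{j+1}\). Hence
\[
\Exp(B^{(j)}+\wide{\eta})\,\Exp(B^{(j)})^{-1}\equiv I+\wide{J(\phi_0)\eta}\pmod{\m^{j+1}}.
\]
\item Left versus right trivialisation. If you keep the left error \(\Exp(B^{(j)})^{-1}Q\), the coefficient becomes \(R^{T}J(\phi_0)\) with \(R=\exp(B_0)\), via \(R^T\wide{v}R=\wide{R^Tv}\). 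This matrix has the same nonzero determinant. Alternatively, use the right error \(Q\,\Exp(B^{(j)})^{-1}\) throughout, and \(J(\phi_0)\) appears directly.
\end{enumerate}

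\textbf{What each approach buys.} The paper's proof exhibits the generator as the Weil prolongation of a smooth real local logarithm near \(\pr(Q)\). Smooth dependence on \(Q\) and compatibility with the identity-transfer calculus then come for free. The cost is the inverse function theorem and the auxiliary symmetric factor. Your proof is purely algebraic and constructive: it is an explicit finite recursion that uses only the first-order differential of the real exponential at \(B_0\).

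The same filtration argument also gives uniqueness of the lift over a fixed \(B_0\). If \(\Exp(B)=\Exp(B')\) and \(B'-B=\wide{\eta}\) with \(\eta\in(\m^j)^3\), invertibility of \(J(\phi_0)\) forces \(\eta\in(\m^{j+1})^3\). Iterating gives \(B'=B\), so your generator coincides with the paper's.
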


\begin{proof}
Put \(Q_0=\pr(Q)\) and choose a real generator
\(B_0=\wide{\phi_0}\in\so(3,\R)\) with
\(\exp B_0=Q_0\) and \(\theta_0=\|\phi_0\|\in[0,\pi]\). In axial coordinates, the
right-trivialised differential of the real exponential is the Jacobian
\(J(\phi_0)\) defined in \eqref{eq:leftJacobianDefinition}. By
\eqref{eq:JacobianConsequences}, its determinant is nonzero for
\(0\le\theta_0\le\pi\), with value one at \(\theta_0=0\), including at
\(\theta_0=\pi\).

Let \(\operatorname{Sym}(3,\R)\) be the space of real symmetric matrices and consider the
smooth map between real vector spaces of equal dimension
\[
 \Psi:\so(3,\R)\times\operatorname{Sym}(3,\R)\longrightarrow M_3(\R),
 \qquad \Psi(B,S)=\exp(B)\exp(S).
\]
After right translation by \(Q_0^{-1}\), its differential at \((B_0,0)\) sends
\((\wide c,T)\) to
\[
 \wide{J(\phi_0)c}+Q_0TQ_0^T.
\]
The first term is skew-symmetric and the second symmetric; moreover,
\(T\mapsto Q_0TQ_0^T\) is an automorphism of \(\operatorname{Sym}(3,\R)\). Since
\(J(\phi_0)\) is invertible
by \eqref{eq:JacobianConsequences} and
\(M_3(\R)=\so(3,\R)\oplus\operatorname{Sym}(3,\R)\), this differential is an
isomorphism. The inverse function theorem therefore gives open neighbourhoods of
\((B_0,0)\) and \(Q_0\) on which \(\Psi\) is a diffeomorphism.

Because \(\pr(Q)=Q_0\), the Weil point \(Q\) belongs to the prolongation of the latter open
neighbourhood. Define
\[
 (B,S)=(\JA\Psi^{-1})(Q)
 \in\so(3,\A)\times\operatorname{Sym}(3,\A).
\]
For real \(M=\Psi(C,T)\) in this neighbourhood,
\[
 M^TM=\exp(2T),
 \qquad T=\frac12\log(M^TM),
\]
where the matrix logarithm is the smooth real logarithm on the positive-definite matrices near
\(I\). Prolongation of this identity and \(Q^TQ=I\) give
\[
 S=\frac12\Log(Q^TQ)=0.
\]
Finally, prolonging \(\Psi\circ\Psi^{-1}=\mathrm{id}\) yields
\[
 Q=\Exp(B)\Exp(S)=\Exp(B),
\]
which proves the assertion using only maps defined on open subsets of real vector spaces.
\end{proof}

When \(Q_0\ne I\), the lift \(B\) associated with the chosen real generator \(B_0\)
in \cref{thm:surjectivity} has
\(s_0\in(0,\pi^2]\), and \cref{cor:axis} constructs explicitly
\(\theta=\JA(\sqrt{\cdot})(\phi^T\phi)\) and \(u=\phi\theta^{-1}\), with
\(u^Tu=1\). Thus the normalised axis-angle formula is available on every nonidentity real
fibre and is smooth on the chosen local branch. At a half-turn, put \(u_0=\phi_0/\pi\).
Either of the two real generators \(\pm\pi\wide{u_0}\) may be chosen, and each gives
a regular local branch.

If \(Q_0=I\), a particularly direct principal logarithm is available. Since
\(X=Q-I\in\m\otimes M_3(\R)\) is nilpotent,
\begin{equation}\label{eq:finiteLog}
 B=\Log Q:=\sum_{j=1}^{N}\frac{(-1)^{j+1}}{j}X^j
\end{equation}
is a finite polynomial. The finite polynomial identities for logarithm and exponential give
\(\Exp(B)=Q\). Indeed, at scalar base point zero the finite prolongation of the real matrix
exponential is exactly \(\sum_{j=0}^{N}B^j/j!\); hence this polynomial calculation and the
Weil-prolonged matrix exponential are the same map on the identity fibre. Moreover,
\[
 B^T=\Log(Q^T)=\Log(Q^{-1})=-\Log Q,
\]
so \(B\in\so(3,\A)\). Formula \eqref{eq:finiteLog} is a finite nilpotent identity, not a
convergence assertion.

\section{Failure of a unit axis above the identity}

The existence of an exponential generator does not imply that the generator factors as a
single scalar angle times a unit multidual axis.

\begin{theorem}[A branch-independent obstruction]\label{thm:noaxis}
Let
\[
 \A=\R[\varepsilon_1,\varepsilon_2]/
 (\varepsilon_1^2,\varepsilon_2^2)
\]
and choose linearly independent \(p,q\in\R^3\). Set
\[
 \phi=\varepsilon_1p+\varepsilon_2q,
 \qquad Q=\Exp(\wide\phi)\in\SO(3,\A).
\]
Then there are no \(\theta\in\A\) and \(u\in\A^3\) such that
\[
 u^Tu=1,\qquad Q=\Exp(\theta\wide u).
\]
The conclusion covers both the principal branch \(\pr(\theta)=0\) and every nonprincipal
branch \(\pr(\theta)=2k\pi\), \(k\ne0\).
\end{theorem}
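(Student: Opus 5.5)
The plan is to compare first-order coefficients. Both $Q$ and any candidate $\Exp(\theta\wide u)$ are values of the finite prolongation \eqref{eq:prolongation} of the real map $\psi\mapsto\exp\wide\psi$ on $\R^3$. For an argument $\psi=\psi_0+\varepsilon_1\psi_1+\varepsilon_2\psi_2+\varepsilon_1\varepsilon_2\psi_{12}$, \eqref{eq:prolongation} shows that the $\varepsilon_1$-coefficient of $\JA(\exp\wide{\cdot})(\psi)$ is exactly $(\mathrm d\exp)_{\wide{\psi_0}}(\wide{\psi_1})$, and likewise for $\varepsilon_2$. The reason is that every monomial $H^\alpha$ with $|\alpha|\ge2$ lies in $\R\,\varepsilon_1\varepsilon_2$, because $\varepsilon_1^2=\varepsilon_2^2=0$. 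Right-trivialising with \eqref{eq:leftJacobianDefinition}, the $\varepsilon_i$-coefficient of $\Exp(\wide\psi)\exp(-\wide{\psi_0})$ is therefore $\wide{J(\psi_0)\psi_i}$. I will apply this both to $\phi$ and to the hypothetical generator $\theta u$.

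\emph{Step 1 (the given $Q$).} Here $\psi_0=0$, $\psi_1=p$ and $\psi_2=q$. Since $J(0)=I$, the $\varepsilon_1$- and $\varepsilon_2$-coefficients of $Q$ are $\wide p$ and $\wide q$, and $\pr(Q)=I$. One can also check this directly: $S=\phi^T\phi=2\varepsilon_1\varepsilon_2\,p^Tq$, and by \eqref{eq:cubic} $\wide\phi^3=-S\wide\phi=0$, so $Q=I+\wide\phi+\tfrac12\wide\phi^2$ with $\wide\phi^2\in\varepsilon_1\varepsilon_2 M_3(\R)$.

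\emph{Step 2 (the hypothetical factorisation).} Suppose $Q=\Exp(\theta\wide u)$ with $u^Tu=1$. Write $\theta=\theta_0+\varepsilon_1\theta_1+\varepsilon_2\theta_2+\varepsilon_1\varepsilon_2\theta_{12}$ and $u=u_0+\varepsilon_1u_1+\varepsilon_2u_2+\varepsilon_1\varepsilon_2u_{12}$.
\begin{itemize}
\item The scalar part of $u^Tu=1$ gives $\|u_0\|=1$.
\item The $\varepsilon_i$-parts give $u_0^Tu_i=0$ for $i=1,2$.
\item Reducing $Q=\Exp(\theta\wide u)$ modulo $\m$ gives $\exp(\theta_0\wide{u_0})=I$. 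Hence $\theta_0=2k\pi$ for some $k\in\mathbb Z$, and this covers all branches simultaneously.
\end{itemize}
The generator is $\psi=\theta u$, with $\psi_0=2k\pi u_0$ and $\psi_i=\theta_iu_0+\theta_0u_i$. Since $\exp(-\wide{\psi_0})=I$, the $\varepsilon_i$-coefficient of $Q$ equals $\wide{J(\psi_0)\psi_i}$.

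\emph{Step 3 (both branch types).} If $k=0$, then $J(0)=I$ and $\theta_0u_i=0$, so $J(\psi_0)\psi_i=\theta_iu_0$. If $k\neq0$, then \eqref{eq:JacobianConsequences} gives $J(2k\pi u_0)=u_0u_0^T$, and $u_0^Tu_i=0$ again yields $J(\psi_0)\psi_i=\theta_iu_0$. Comparing with Step 1 gives $p=\theta_1u_0$ and $q=\theta_2u_0$. Thus $p$ and $q$ are linearly dependent, which contradicts the hypothesis.

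The main obstacle is the nonprincipal branch. There the derivative of $\exp$ at $2k\pi u_0$ is degenerate, and one must know that it kills exactly the directions orthogonal to $u_0$. The argument therefore relies on the value $J(2k\pi u)=uu^T$ from \eqref{eq:JacobianConsequences}, together with the orthogonality $u_0^Tu_i=0$ forced by the unit-axis constraint. A secondary point to state carefully is the extraction of the $\varepsilon_i$-coefficient from \eqref{eq:prolongation}, namely that no higher-order term contaminates it. This is immediate from $\varepsilon_i^2=0$ but should be spelled out.
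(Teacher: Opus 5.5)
Your proposal is correct and follows the paper's proof essentially step for step. You compare the $\varepsilon_i$-coefficients through the right-trivialised Jacobian, use $J(0)=I$ when $k=0$, and use $J(2k\pi u_0)=u_0u_0^T$ together with $u_0^Tu_i=0$ when $k\neq0$, which forces $p$ and $q$ to be parallel to $u_0$; your explicit check that no term with $|\alpha|\ge2$ contaminates the $\varepsilon_i$-coefficient (since $\m^2=\R\,\varepsilon_1\varepsilon_2$) is a detail the paper leaves implicit.
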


\begin{proof}
Assume such \(\theta,u\) exist, and write \(\theta_0=\pr(\theta)\),
\(u_0=\pr(u)\). Reduction gives \(u_0^Tu_0=1\) and
\(I=\exp(\theta_0\wide{u_0})\), hence \(\theta_0=2k\pi\) for some
\(k\in\mathbb Z\).

For \(i=1,2\), denote by \(\theta_i\) and \(v_i\) the coefficients of \(\varepsilon_i\) in
\(\theta\) and \(u\), respectively. The unit condition gives \(u_0^Tv_i=0\), while
\[
 [\theta u]_{\varepsilon_i}=\theta_i u_0+\theta_0v_i.
\]
The differential of the exponential gives the corresponding first-order coefficient in
right-trivialised axial form:
\begin{equation}\label{eq:tangentmechanism}
 [Q]_{\varepsilon_i}^{\mathrm{ax}}
 =J(\theta_0u_0)(\theta_i u_0+\theta_0v_i).
\end{equation}
Here \([Q]_{\varepsilon_i}^{\mathrm{ax}}\) denotes the axial vector of the
\(\varepsilon_i\)-coefficient of \(Q\) after right trivialisation by \(Q_0^{-1}\). Since
\(Q_0=I\), this trivialisation is the identity. For \(k=0\), \(J(0)=I\) and
\eqref{eq:tangentmechanism} reduces to \(\theta_i u_0\). Thus the two first-order axial
coefficients \(p\) and \(q\) would both be parallel to \(u_0\), contrary to their independence.

Let now \(k\ne0\). Equation \eqref{eq:JacobianConsequences} gives
\[
 J(2k\pi u_0)=u_0u_0^T,
\]
the rank-one projection onto \(\R u_0\). Since \(u_0^Tv_i=0\), formula
\eqref{eq:tangentmechanism} again reduces to \(\theta_i u_0\). It cannot produce the two
independent tangents \(p\) and \(q\). This contradiction excludes every branch.
\end{proof}

\begin{remark}[The ordinary dual algebra]
The mechanism of \cref{thm:noaxis} uses two independent first-order nilpotent directions. For
\(\A=\R[\varepsilon]/(\varepsilon^2)\), every nonzero infinitesimal axial vector
\(\phi=\varepsilon p\) has the unit-axis factorisation
\(\phi=(\varepsilon\|p\|)(p/\|p\|)\). Thus the particular obstruction used in
\cref{thm:noaxis} is unavailable when \(\dim(\m/\m^2)=1\); the theorem makes no general
claim for all Weil algebras with a higher-dimensional first-order space.
\end{remark}

Thus the correct global statement is:
\begin{quote}
Every \(Q\in\SO(3,\A)\) is exponential and satisfies an unnormalised Rodrigues formula;
a representation by one scalar multidual angle and one unit multidual axis exists on a
regular local branch over each nonidentity real fibre, but need not exist over the identity fibre.
\end{quote}

\section{Kinematic consequences and illustrations}\label{sec:kinematics}

The transfer result has three distinct uses in kinematics. The following construction makes the
first two explicit and identifies the multidual differential transform used in higher-order
kinematics \cite{ConduracheOverview2025} as a particular Weil jet.

Let
\[
 \A_n=\R[\varepsilon]/(\varepsilon^{n+1})
\]
be the chain algebra. For a curve \(f:I\to\R^r\) of class \(C^n\), define its multidual
differential transform at \(t\) by
\begin{equation}\label{eq:differentialTransform}
 \breve f(t):=(J_{\A_n}f)(t+\varepsilon)
 =\sum_{k=0}^{n}\frac{\varepsilon^k}{k!}f^{(k)}(t).
\end{equation}
Thus the differential transform is not defined by an infinite operator series: it is the value of
the finite Weil prolongation at \(t+\varepsilon\). In particular,
\cref{thm:functoriality} gives
\[
 \breve{(fg)}=\breve f\,\breve g,
 \qquad
 \breve{(F\circ f)}=(J_{\A_n}F)(\breve f)
\]
whenever the products and compositions are defined.

For example, if \(R\in C^n(I,\SO(3,\R))\) is a rotation curve, then
\[
 \breve R(t)=\sum_{k=0}^{n}\frac{\varepsilon^k}{k!}R^{(k)}(t)
 \in M_3(\A_n).
\]
Prolongation of the real identities \(R^TR=I\) and \(\det R=1\) yields, exactly,
\begin{equation}\label{eq:transformedOrthogonality}
 \breve R^{T}\breve R=I,
 \qquad
 \det\breve R=1;
 \qquad\text{hence}\qquad
 \breve R\in\SO(3,\A_n).
\end{equation}
If \(R(t)=\exp(\wide{\phi(t)})\) with \(\phi\in C^n(I,\R^3)\), then
\cref{thm:Rodrigues} gives the finite identity
\[
 \breve R
 =I+a_{\A_n}(\breve\phi^{T}\breve\phi)\wide{\breve\phi}
   +b_{\A_n}(\breve\phi^{T}\breve\phi)\wide{\breve\phi}^{2}.
\]
Given the jet of \(\phi\), both the higher derivatives encoded by \(\breve R\) and its orthogonality
constraints are obtained simultaneously, without numerical differentiation and without an
infinite series over \(\A_n\).

The same mechanism covers the hyper-multidual quaternion representation. Let
\(\mathbb D=\R[\varepsilon_0]/(\varepsilon_0^2)\) be the dual algebra and
\(\A_n^{\mathrm{HMD}}=\mathbb D\otimes_{\R}\A_n\) the commutative HMD coefficient
algebra. Let \(\mathbb H\) be the real quaternion algebra and regard a \(C^n\) curve
of dual quaternions \(q:I\to\mathbb H\otimes_{\R}\mathbb D\simeq\mathbb D^4\)
as an \(\R^8\)-valued curve. Quaternion conjugation, denoted by \(^{*}\), fixes the
coefficients in \(\mathbb D\), and after prolongation those in
\(\A_n^{\mathrm{HMD}}\). Put
\[
 \breve q(t)=\sum_{k=0}^{n}\frac{\varepsilon^k}{k!}q^{(k)}(t),
\]
which is a quaternion over \(\A_n^{\mathrm{HMD}}\). Quaternion multiplication and conjugation are
polynomial real maps. Therefore, if \(q(t)q(t)^*=1\), identity transfer gives
\begin{equation}\label{eq:HMDunit}
 \breve q(t)\breve q(t)^*=1.
\end{equation}
For a unit dual quaternion \(q\), the Euler--Rodrigues action
\(\Theta(q)v=qvq^*\) is on pure dual quaternions
\(v\in\operatorname{Im}(\mathbb H)\otimes_{\R}\mathbb D\simeq\mathbb D^3\).
Its coefficients are polynomial real functions of those of \(q\), so its quaternionic
and orthogonal-tensor descriptions commute with finite prolongation. Equations
\eqref{eq:transformedOrthogonality} and \eqref{eq:HMDunit} provide the precise finite-jet basis
for the multidual homogeneous-matrix, HMD orthogonal-tensor, and HMD unit-quaternion
constructions used in higher-order rigid-body and multibody kinematics
\cite{ConduracheOverview2025}.

Finally, composition is preserved under the hypotheses of
\cref{thm:functoriality}: compatible real domains, sufficient finite differentiability, and a
single fixed branch for multivalued functions. These are the operative assumptions when
logarithms, fractional powers, or interpolation charts are used.

For the chain algebra, every individual \(Q\in\SO(3,\A_n)\) is a curve jet.
Indeed, by \cref{thm:surjectivity}, write
\(Q=\operatorname{Exp}_{\A_n}(B)\) with
\(B=\sum_{k=0}^{n}\varepsilon^kB_k\), \(B_k\in\so(3,\R)\). The real curve
\[
 \gamma(s)=\exp\!\left(\sum_{k=0}^{n}s^kB_k\right)
\]
satisfies \((J_{\A_n}\gamma)(\varepsilon)=Q\) by \cref{thm:functoriality}.
For a time-dependent family \(Q(t)\), however, pointwise membership in
\(\SO(3,\A_n)\) does not impose the differential compatibility conditions in
\cref{def:holonomicity}. Those conditions express that the entire family is the jet of
one real motion, namely its scalar-part curve.

\section{Conclusions}

The exponential map \(\so(3,\A)\to\SO(3,\A)\) is surjective for every Weil algebra
\(\A\), and every proper orthogonal \(3\times3\) Weil tensor admits an exact unnormalised Rodrigues
form. This does not imply a normalised axis-angle representation: such a representation is
regular on a chosen local branch over each nonidentity real fibre, including rotations by
\(\pi\), but may fail over
the identity. The counterexample in \cref{thm:noaxis} excludes both the principal logarithm
and every branch with scalar angle \(2k\pi\).

The analytic foundation is finite Weil prolongation. A scalar real trace uniquely determines
a sufficiently regular \(\A\)-holomorphic function, and functoriality transports real
functional, matrix, orthogonality, and group identities exactly. Besides supporting the
geometric results, this places multidual differential transforms and HMD quaternion or matrix
representations within one finite-jet framework, with explicit domain and branch conditions.

\section*{Statements and Declarations}

\noindent\textbf{Funding.} The author received no specific funding for this work.

\noindent\textbf{Competing interests.} The author declares no competing interests.

\noindent\textbf{Data availability.} No datasets were generated or analysed in this study.

\noindent\textbf{Author contributions.} The author is solely responsible for the
conceptualisation, mathematical analysis, proofs, and writing of the manuscript.

\end{document}